\documentclass[12pt, a4paper]{article}

\sloppy

\usepackage{mathrsfs}

\usepackage{srcltx}
\usepackage{amsfonts,amssymb,mathrsfs,amsmath,cmtiup}
\usepackage{amsthm, eucal, eufrak}

\setlength{\topmargin}{0cm} \setlength{\oddsidemargin}{0.5cm}
\setlength{\evensidemargin}{0.5cm} \pagestyle{plain}
\textwidth=16cm \textheight=23cm

\newtheorem{theorem}{Theorem}[section]
\newtheorem{lemma}[theorem]{Lemma}
\newtheorem{proposition}[theorem]{Proposition}

\theoremstyle{definition}
\newtheorem*{definition}{Definition}
\newtheorem{remark}[theorem]{Remark}
\newtheorem*{Index Convention}{Index Convention}
\newtheorem*{notation}{Notation}
\newtheorem{example}[theorem]{Example}

\def\keywords#1{\par\medskip
\noindent\textbf{Key words.} #1}

\def\subjclass#1{{\renewcommand{\thefootnote}{}
\footnote{\emph{Mathematics Subject Classification (2010):} #1}}}

\begin{document}
\let\le=\leqslant
\let\ge=\geqslant
\let\leq=\leqslant
\let\geq=\geqslant

\title{Finite $p$-groups with a  Frobenius group of automorphisms whose kernel is a cyclic $p$-group}

\markright{}

\author{{E.\,I.~Khukhro}\footnote{The first author was supported by the Russian Science Foundation, project no. 14-21-00065.}\\ \small Sobolev Institute of Mathematics, Novosibirsk,
630\,090, Russia\\[-1ex] \small khukhro@yahoo.co.uk \\
{N.\,Yu.~Makarenko}\footnote{The second author was supported by the Russian Foundation
for Basic Research, project no. 13-01-00505}\\
\small Universit\'{e} de Haute Alsace, Mulhouse, 68093, France
and\\ \small Sobolev Institute of
Mathematics, Novosibirsk, 630\,090, Russia\\
[-1ex] \small  natalia\_makarenko@yahoo.fr}

\date{}
\maketitle
\subjclass{Primary  20D45,  Secondary 17B40, 17B70, 20D15}

\begin{center}{\it to
Victor Danilovich Mazurov on the occasion of his 70th birthday}
\end{center}

\begin{abstract}
Suppose that a finite $p$-group $P$
admits a Frobenius group of
automorphisms $FH$ with kernel $F$ that is a cyclic $p$-group and
with complement $H$.   It is proved that if the fixed-point
subgroup $C_P(H)$ of the complement is nilpotent of class $c$,
then $P$ has a characteristic subgroup of index bounded in terms
of $c$, $|C_P(F)|$, and $|F|$ whose nilpotency class is bounded in
terms of $c$ and $|H|$ only. Examples show that the condition of
$F$ being cyclic is essential. The proof is based on a Lie ring
method and a theorem of the authors and P.~Shumyatsky about Lie
rings with a metacyclic Frobenius group of automorphisms $FH$. It
is also proved that $P$ has a characteristic subgroup of
$(|C_P(F)|, |F|)$-bounded index whose order and rank are bounded
in terms of $|H|$ and the order and rank of $C_P(H)$,
respectively, and whose exponent is bounded in terms of the
exponent of $C_P(H)$.
\end{abstract}

\keywords{finite $p$-group, Frobenius group, automorphism, nilpotency class,  Lie ring}

\section{Introduction}

It has long been known that results on `semisimple'
fixed-point-free automorphisms of nilpotent groups and Lie rings
can be applied for studying  `unipotent' $p$-automorphisms of
finite $p$-groups. Alperin \cite{al} was the first to use Higman's
theorem on Lie rings and nilpotent groups with a fixed-point-free
automorphism of prime order $p$ in the study of a finite $p$-group
$P$ with an automorphism $\varphi $ of order $p$. Namely, Alperin
\cite{al} proved that the derived length of $P$ is bounded in
terms of the number of fixed points $p^m=|C_P(\varphi  )|$. Later the
first author \cite{khu85} improved the argument to obtain a
subgroup of $P$ of $(p,m)$-bounded index and of $p$-bounded
nilpotency class, and the second author \cite{mak88} noted  that
this class can be bounded by $h(p)$, where $h(p)$ is Higman's
function bounding the nilpotency class of a Lie ring  or a
nilpotent group with a fixed-point-free automorphism  of order
$p$. Henceforth we write for brevity, say, ``$(a,b,\dots
)$-bounded'' for ``bounded above by some function depending only
on $a, b,\dots $''. Further strong results on $p$-automorphisms of finite $p$-groups
were obtained by Kiming \cite{kim}, McKay \cite{mck}, Shalev  \cite{sha}, Khukhro \cite{khu93},
Medvedev \cite{med98, med99}, Jaikin-Zapirain \cite{jai}, Shalev and Zelmanov \cite{sha-zel}
giving subgroups of bounded index and of bounded derived length or
nilpotency class. The proofs of most of these `unipotent' results were
also based on the `semisimple' theorems of Higman \cite{hi}, Kreknin
\cite{kr}, Kreknin and Kostrikin \cite{kr-ko} on
fixed-point-free automorphisms of Lie rings.

In the present paper `unipotent' theorems are derived from the
recent `semisimple'  results of the authors and Shumyatsky
\cite{khu-ma-shu, khu-ma-shu-DAN} about groups $G$ (and Lie
rings $L$) admitting a Frobenius group $FH$ of automorphisms with
kernel $F$ and complement $H$. The results concern the connection
between the nilpotency class, order, rank, and exponent of $G$ and
the corresponding parameters of $C_G(H)$. The more difficult of
these results is about the nilpotency class, and its proof is
based on the corresponding Lie ring theorem. Namely, it was proved
in \cite{khu-ma-shu} that if the kernel $F$ is cyclic and acts on
a Lie ring $L$ fixed-point-freely, $C_L(F)=0$, and the fixed-point
subring $C_L(H)$ of the complement is nilpotent of class $c$, then
$L$ is nilpotent of $(c,|H|)$-bounded class (under certain
assumptions on the additive group of $L$, which are satisfied in
many important cases, like $L$ being an algebra over a field, or
being finite). Note that examples show that the condition of $F$
being cyclic is essential. This Lie ring result also implied a
similar result for a finite group $G$ with a Frobenius group $FH$
of automorphisms with cyclic fixed-point-free kernel $F$ such that
$C_G(H)$ is nilpotent of class $c$, with reduction to nilpotent
case provided by classification and representation theory
arguments. The fixed-point-free action of $F$ alone was known to
imply nice properties of the Lie ring (solubility of $|F|$-bounded
derived length by Kreknin's theorem \cite{kr}) and of the group
(solubility and well-known bounds for the Fitting height due to
 Thompson \cite{th2}, Kurzweil \cite{kurz},  Turull \cite{tu},  and others --- although  an analogue of
Kreknin's theorem is still an open problem for groups). But the conclusions of the results in
\cite{khu-ma-shu} are in a sense much stronger, due to the
combination of the hypotheses on fixed points of $F$ and $H$, either of which on its own is  insufficient.

We now state the `unipotent' version of the nilpotency class result in \cite{khu-ma-shu}.

 \begin{theorem}\label{t1}
 Suppose that a finite $p$-group $P$ admits a
Frobenius group $FH$  of automorphisms with cyclic kernel $F$ of
order $p^k$.
Let $c$ be the nilpotency class of the fixed-point subgroup $C_P(H)$ of the complement.
Then $P$ has a characteristic
subgroup of index bounded in terms of  $c$, $|F|$, and $|C_P(F)|$ whose nilpotency class is bounded in
terms of $c$ and $|H|$ only.
 \end{theorem}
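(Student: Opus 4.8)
The plan is to pass to the associated Lie ring and reduce the assertion to a statement about Lie rings on which the fixed-point-free theorem of \cite{khu-ma-shu} can be brought to bear. First I would form the associated Lie ring $L=L(P)=\bigoplus_i \gamma_i(P)/\gamma_{i+1}(P)$ (or the Lie ring attached to the Jennings--Zassenhaus filtration, so as to obtain an algebra over $\mathbb{F}_p$). Since the filtration is characteristic, the Frobenius group $FH$ acts on $L$ by automorphisms, the nilpotency class of $L$ equals that of $P$, and a characteristic ideal of $L$ of bounded codimension corresponds to a characteristic subgroup of $P$ of bounded index whose nilpotency class is controlled by that of the ideal. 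Thus it suffices to produce in $L$ a characteristic ideal of $(c,|F|,|C_P(F)|)$-bounded codimension that is nilpotent of $(c,|H|)$-bounded class.

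Next I would record how the two fixed-point hypotheses descend to $L$. Because $FH$ is a Frobenius group with kernel of order $p^k$, the complement order $|H|$ divides $p^k-1$ and is therefore coprime to $p$; hence $H$ acts coprimely on the $p$-group $L$, and standard coprime-action arguments identify $C_L(H)$ with the associated Lie ring of $C_P(H)$, so that $C_L(H)$ is nilpotent of class at most $c$. On the other hand the fixed points of the kernel satisfy $|C_L(F)|\le|C_P(F)|$, so $F$ is an \emph{almost} fixed-point-free group of automorphisms of $L$. If one had $C_L(F)=0$, the Lie ring theorem of \cite{khu-ma-shu} would apply verbatim and give that $L$ is nilpotent of $(c,|H|)$-bounded class, finishing the proof at once.

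The main obstacle is exactly that one never has $C_L(F)=0$: a nontrivial $p$-element acts as a unipotent operator on the characteristic-$p$ Lie ring $L$ and therefore always has nonzero fixed points, so the fixed-point-free theorem cannot be invoked directly. I would circumvent this by using that theorem not as a black box but as a source of multilinear consequences. Concretely, its fixed-point-free conclusion amounts to a family of multilinear Lie-ring identities (the vanishing of all commutators of some length $N=N(c,|H|)$) that hold whenever $C_L(F)=0$; the Khukhro--Makarenko method of graded centralizers then converts the hypothesis that $C_L(F)$ is small into a large characteristic ideal $M\trianglelefteq L$, of codimension bounded in terms of $c$, $|F|$ and $|C_P(F)|$, on which these identities hold identically, so that $M$ is nilpotent of $(c,|H|)$-bounded class. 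The delicate point here, and the step I expect to require the most care, is to extract from the proof of the fixed-point-free theorem identities with $p$-integral coefficients that are compatible with the $H$-module grading, so that they survive passage to characteristic $p$ and are amenable to the graded-centralizer construction.

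Finally I would translate back to $P$: the characteristic ideal $M$ corresponds to a characteristic subgroup $Q\le P$ of $(c,|F|,|C_P(F)|)$-bounded index whose nilpotency class is bounded in terms of the class of $M$, hence in terms of $c$ and $|H|$ alone. Taking $Q$ (or, if necessary, the intersection of the characteristic subgroups produced by the construction, to ensure characteristicity in $P$) yields the required subgroup and completes the proof.
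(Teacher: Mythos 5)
Your overall frame---pass to the associated Lie ring, let $FH$ act on it, and exploit the Makarenko--Khukhro--Shumyatsky theorem through its multilinear consequences rather than as a black box---is the right one, and it is indeed what the paper does (the ``multilinear consequence'' is Proposition~\ref{combinatorial}). But the central mechanism you propose has a genuine gap. You assert that $|C_{L}(F)|\le |C_P(F)|$, so that $F$ is ``almost fixed-point-free'' on $L=L(P)$ in the sense required by the graded-centralizer method. This is false: $L(P)=\bigoplus_i\gamma_i(P)/\gamma_{i+1}(P)$ has as many homogeneous components as the nilpotency class of $P$ (which is exactly what we are trying to bound), and Lemma~\ref{l1} only bounds the fixed points of $\varphi$ on \emph{each} factor by $p^m$. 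Hence $|C_{L(P)}(\varphi)|$ is unbounded; what is bounded is the \emph{exponent} of $C_{L(P)}(\varphi)$, namely $p^mC_{L(P)}(\varphi)=0$ by Lagrange. With only a bound on the exponent of the fixed-point subring, the graded-centralizer construction does not produce an ideal of bounded codimension, so your route to the characteristic ideal $M$ does not get started. The paper instead uses Proposition~\ref{combinatorial} to place $p^{k(f+w)}\gamma_f(L)$ inside ${}_{\rm id}\langle L_0\rangle$ and then multiplies by $p^m$ to kill it, concluding that the lower central factors $\gamma_i(P)/\gamma_{i+1}(P)$ for $i\ge f$ have bounded exponent, hence (using the rank bound of Lemma~\ref{l-b-r}) bounded order.

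A second, related gap is the final translation step. Even if you had a characteristic ideal of $L(P)$ of bounded codimension and bounded class, it does not ``correspond'' to a characteristic subgroup of $P$ of bounded index and bounded class: ideals of the associated Lie ring are not in such a correspondence with subgroups, and this back-translation is a known difficulty, not a formality. The paper circumvents it entirely: from the bounded order of the factors $\gamma_i(P)/\gamma_{i+1}(P)$, $i\ge f$, it passes to $Q=\gamma_{U+1}(P\langle\varphi\rangle)$ with $U=(kf+kw+m)mp^k$ and invokes P.~Hall's theorem (if $\gamma_{i+1}(Q)\ne1$ then $|\gamma_i(Q)/\gamma_{i+1}(Q)|\ge p^{U+1}$) to force $\gamma_{f+1}(Q)=1$, then bounds $|P\langle\varphi\rangle:Q|$ and descends to a characteristic power subgroup $P^{p^e}\le Q$. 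Some device of this kind (the Alperin--Khukhro argument) is indispensable, and your proposal omits it. Your side remark about using the Jennings--Zassenhaus filtration would also need care, since for that Lie algebra the nilpotency class need not equal that of $P$.
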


The proof is quite similar to the proofs of the
aforementioned results of Alperin \cite{al} and Khukhro
\cite{khu85}, with the Lie ring theorem in \cite{khu-ma-shu}
taking over the role of the Higman--Kreknin--Kostrikin theorem.
However, first a certain combinatorial corollary of that Lie ring
theorem has to be derived (Proposition~\ref{combinatorial}). Example~\ref{example} shows that the condition
of the kernel $F$ being cyclic in Theorem~\ref{t1} is essential.

We now state the unipotent versions of the rank, order, and exponent results in
\cite{khu-ma-shu}. (By the rank we mean the minimum number $r$ such that every subgroup can be generated by $r$ elements.)

\begin{theorem} \label{t-g}
 Suppose that a finite $p$-group $P$ admits a
Frobenius group $FH$  of automorphisms with cyclic kernel $F$ of
order $p^k$.
Then $P$ has a characteristic subgroup $Q$ of index bounded in terms of $|F|$ and $|C_P(F)|$ such that

{\rm (a)} the order of $Q$ is at most $|C_P(H)|^{|H|}$;

{\rm (b)} the rank of $Q$ is at most $r|H|$, where  $r$ is the rank of $C_P(H)$;

{\rm (c)} the exponent of $Q$ is at most $p^{2e}$, where $p^e$ is the exponent of $C_P(H)$.
\end{theorem}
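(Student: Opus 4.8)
The plan is to run the same Lie ring method that underlies Theorem~\ref{t1}, but now tracking the three numerical invariants rather than the nilpotency class. First I would pass from the group $P$ to a graded Lie ring: using the Jennings--Lazard (dimension subgroup) series $P=D_1\ge D_2\ge\cdots$, form $L=\bigoplus_i D_i/D_{i+1}$, an $\mathbb{F}_p$-Lie ring on which the whole group $FH$ acts by graded automorphisms. Here $\dim_{\mathbb{F}_p}L=\log_p|P|$, the elementary abelian sections $D_i/D_{i+1}$ carry the rank information, $\dim C_L(F)$ is bounded in terms of $|C_P(F)|$ and $|F|$, and $\dim C_L(H)$ is controlled by $|C_P(H)|$. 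The aim is to prove the corresponding numerical bounds for $L$ and transfer them back.

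The crux is that, $F$ being a $p$-group in characteristic $p$, its action on $L$ is unipotent, so $C_L(F)\ne 0$ and the hypothesis $C_L(F)=0$ of the semisimple results of \cite{khu-ma-shu} can never hold literally. As in the proof of Theorem~\ref{t1}, I would convert the unipotent action of a generator $\varphi$ of $F$ into a grading: filtering $L$ by the powers of $\varphi-1$ and passing to the associated graded object yields a $(\mathbb{Z}/p^k)$-grading in which the role of the fixed-point subring is played by a single homogeneous ``zero'' component, of dimension again bounded in terms of $|C_P(F)|$ and $|F|$. In this picture the fixed-point-free case $C_L(F)=0$ becomes the purely combinatorial condition that the zero component vanishes, exactly the setting of Proposition~\ref{combinatorial} and of the order and rank statements extracted from \cite{khu-ma-shu}. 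The complement $H$ permutes the remaining nonzero components in free orbits of size $|H|$, and an almost-fixed-point-free argument produces a homogeneous ideal $M\le L$ of $(|C_P(F)|,|F|)$-bounded codimension whose zero component vanishes; a dimension count along each $H$-orbit then gives $\dim M\le |H|\cdot\dim C_M(H)$.

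Transferring back, an ideal of bounded codimension in $L$ corresponds to an $FH$-invariant subgroup $Q_0\le P$ of $(|C_P(F)|,|F|)$-bounded index, and the dimension inequality becomes $|Q_0|\le |C_P(H)|^{|H|}$, giving (a). Running the same count on each elementary abelian $FH$-section $D_i/D_{i+1}$ (where no exponent obstruction arises) bounds every $\mathbb{F}_p$-rank by $|H|$ times the rank of $C_P(H)$; since controlling the ranks of all elementary abelian sections controls the rank of the group, this gives (b). For the exponent (c) the graded $\mathbb{F}_p$-ring is useless, as it has additive exponent $p$, so I would instead argue directly in $P$ via the power structure: using that $H$ acts on the $F$-components, the $p^e$-th power map controls $P$ modulo a subgroup on which it is again controlled, producing a two-step bound $\exp Q\le(\exp C_P(H))^2=p^{2e}$, the factor $2$ reflecting passage through one subgroup and one quotient each of exponent at most $p^e$. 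Finally I would upgrade the $FH$-invariant bounded-index subgroup $Q_0$ to a \emph{characteristic} subgroup $Q$ of still bounded index, using that the subgroups of $P$ of a given bounded index form a finite set permuted by $\mathrm{Aut}(P)$, so a canonical intersection-type construction stays characteristic and of bounded index while preserving all three bounds.

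The main obstacle throughout is the non-coprimeness of the action of the $p$-group $F$ on the $p$-group $P$: it rules out any direct use of the semisimple group results and forces the detour through the graded Lie ring and the combinatorial ``zero-component'' reformulation, together with the extra care needed to keep the correction term controlled purely by $|C_P(F)|$ and $|F|$. I expect the exponent bound (c) to be the most delicate point, since it cannot be read off the characteristic-$p$ graded ring and must be obtained by a separate power-structure argument inside the group itself.
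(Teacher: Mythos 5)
There is a genuine gap at the very core of your plan: the step where you turn the unipotent action of $\varphi$ on a characteristic-$p$ Lie ring into a $({\Bbb Z}/p^k{\Bbb Z})$-grading ``by filtering $L$ by the powers of $\varphi-1$ and passing to the associated graded object'' does not work. Over ${\Bbb F}_p$ the only $p^k$-th root of unity is $1$, so $\varphi$ has no eigenspace decomposition at all; and on the associated graded of the $(\varphi-1)$-filtration the induced action of $\varphi$ is by construction \emph{trivial}, while $H$ (which normalizes $F$ and hence preserves $(\varphi-1)^iL$ up to the harmless replacement of $\varphi$ by $\varphi^r$) \emph{stabilizes} each graded piece rather than permuting components in regular orbits. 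But the regular $H$-orbits on the nonzero components are precisely the engine of all three bounds --- the inequality $\dim M\le |H|\cdot\dim C_M(H)$ you invoke is a free-$kH$-module count that has no source once that orbit structure is gone. So the reformulation ``zero component vanishes'' cannot be reached from the Jennings--Lazard ring, and parts (a) and (b) of your argument have no foundation. (A secondary issue: it is not even clear that $[(\varphi-1)^iL,(\varphi-1)^jL]\subseteq(\varphi-1)^{i+j}L$, so the filtration need not give a graded Lie ring.)

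What the paper actually does avoids Lie rings entirely for this theorem. It works with abelian $FH$-invariant sections $V$ of $P$ over ${\Bbb Z}$, extends scalars to $W=V\otimes{\Bbb Z}[\omega]$ where a genuine primitive $p^k$-th root of unity exists, and uses the honest $\varphi$-eigencomponents $W_i$, which sum ``almost directly'' (the error is killed by multiplication by $p^k$ and absorbed into a subgroup $\Omega_{f+k}(V)$ of bounded order). The trace map $\vartheta\colon y\mapsto y+y^h+\dots+y^{h^{q-1}}$ on a transversal of the regular $H$-orbits of nonzero components then yields the order, rank, and exponent bounds for $V$ modulo a bounded piece (Lemma~\ref{l-ab}). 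These are assembled into the global statement using a reduction to powerful $p$-groups, the bounded-derived-length theorem of \cite{khu93} to cut $P$ into boundedly many abelian sections for (a), the uniformly powerful series for (b), and the fact that $P^{p^{[(v+1)/2]}}$ is abelian in a powerful group for (c) --- this last point is the substance behind your ``two-step'' exponent heuristic, which as stated is too vague to check without the powerfulness reduction. Your final remark on upgrading an invariant subgroup of bounded index to a characteristic one is fine, but it does not rescue the main construction.
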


Note that the estimates for the order and rank are best-possible,
and for the exponent close to being best-possible (and independent
of $|FH|$). The proof is facilitated by a straightforward
reduction to powerful $p$-groups. Then certain
versions of
the `free $H$-module arguments' are applied to abelian
$FH$-invariant sections. If a finite group $G$ admits a
Frobenius group of automorphisms $FH$ with complement $H$ and with
kernel $F$ acting fixed-point-freely, then every elementary
abelian $FH$-invariant section of $G$ is a free $kH$-module (for
various prime fields $k$). This is exactly what provides a
motivation for seeking results bounding various parameters of $G$
in terms of those of $C_P(H)$ and $|H|$.  In the
`semisimple' situation this fact is a basis of the results on the
order and rank in \cite{khu-ma-shu}. The exponent result in
\cite{khu-ma-shu} is more difficult, but in our unipotent
situation a simpler argument can be used based on powerful
$p$-groups to produce a much better result, with the estimate for
the exponent depending only on the exponent of $C_P(H)$.

It should be mentioned that the `semisimple' results on the order
and rank in \cite{khu-ma-shu} do not assume the kernel to be
cyclic. What the `unipotent' analogue of these results for non-cyclic kernel %!!
should be is unclear at the
moment. The results of the present paper can be regarded as
generalizations of the results of \cite{khu-ma-shu}, where the
kernel $F$ acts on $G$ fixed-point-freely, to the case of `almost
fixed-point-free' kernel. It is natural to expect that similar
restrictions, in terms of the complement $H$ and its fixed points
$C_G(H)$, should hold for a subgroup of index bounded in terms of
$|C_G(F)|$ and other parameters: `almost fixed-point-free' action
of $F$ implying that $G$ is `almost' as good as when $F$ acts
fixed-point-freely. In the coprime `semisimple' situation such
restrictions were recently obtained in \cite{khu13} for the order
and rank of $G$, and in \cite{khu-mak13} and \cite{mak-khu13} for
the nilpotency class. For the moment it is unclear how to combine
these semisimple and unipotent results in a general setting,
without assumptions on the orders of $G$ and $FH$; note that the
results in \cite{khu-ma-shu}
for the fixed-point-free kernel
were free of such assumptions.

The authors thank the referee for careful reading and helpful comments. %!!??

\section{Lie ring technique}\label{s-l}

First we recall some definitions and notation.
Products in a Lie ring are called commutators.
The Lie subring generated by a subset~$S$ is denoted by $\langle S\rangle $  and the ideal by
${}_{{\rm id}}\!\left< S \right>$.

Terms of the lower central series of a Lie ring $L$
are defined by induction: $\gamma_1(L)=L$; $\gamma_{i+1}(L)=[\gamma_i(L),L].$
By definition a Lie ring $L$ is nilpotent of class~$h$ if
$\gamma_{h+1}(L)=0$.

A simple commutator $[a_1,a_2,\dots ,a_s]$ of weight (length)
 $s$ is by definition the commutator $[...[[a_1,a_2],a_3],\dots ,a_s]$.

Let $A$ be an additively written abelian group. A Lie ring $L$
is \textit{$A$-graded} if
$$L=\bigoplus_{a\in A}L_a\qquad \text{ and }\qquad[L_a,L_b]\subseteq L_{a+b},\quad a,b\in A,$$
where the grading components $L_a$ are additive subgroups of $L$. Elements of the $L_a$
are called \textit{homogeneous} (with respect to this grading), and commutators in homogeneous
elements \textit{homogeneous commutators}. An additive subgroup
 $H$ of $L$ is said to be \textit{homogeneous}
if $H=\bigoplus_a (H\cap L_a)$; then we set $H_a=H\cap L_a$.
Obviously, any subring or an ideal generated by homogeneous
additive subgroups is
 homogeneous. A homogeneous subring and the
quotient ring by a homogeneous ideal can be regarded as
$A$-graded rings with the induced gradings.

Suppose that a Lie ring $L$ admits a Frobenius
group of automorphisms $FH$ with cyclic kernel $F=\langle\varphi \rangle $ of order $n$.
Let $\omega$ be a primitive $n$-th root of unity. We extend the
ground ring by $\omega$ and denote by $\widetilde L$ the ring
$L\otimes _{{\Bbb Z} }{\Bbb Z} [\omega ]$. Then $\varphi $
naturally acts on $\widetilde L$ and, in particular,
$C_{\widetilde L}(\varphi  ) = C_L(\varphi )\otimes  _{{\Bbb Z} }{\Bbb Z} [\omega ]$.

\begin{definition} We define $\varphi$-\textit{components} $L_k$ for
$k=0,\,1,\,\ldots ,n-1$ as the `eigensubspaces'
$$L_k=\big\{ a\in \widetilde L\mid a^{\varphi}=\omega ^{k}a\big\} .$$
\end{definition}

It is well known that $n\widetilde L\subseteq  L_0 + L_1 + \dots +
L_{n-1}$ (see, for example,~\cite[Ch.~10]{hpbl}). This
decomposition resembles a $({\Bbb Z}/n{\Bbb Z})$-grading because
of the inclusions $[L_s,\, L_t]\subseteq L_{s+t\,({\rm mod}\,n)}$,
but the sum of $\varphi$-components is not direct in general.

\begin{definition}
We refer to commutators in  elements of $\varphi$-components as being \textit{$\varphi$-homo\-ge\-neous}.
\end{definition}

\begin{Index Convention} Henceforth a small letter
with index $i$ denotes an element of the $\varphi$-component $L_i$, so that the index only indicates the
$\varphi$-component to which this
element belongs: $x_i\in L_i$. To lighten the notation we will not
use numbering indices for elements in $L_j$, so that
different elements can be denoted by the same symbol when
it only matters to which $\varphi$-component these elements belong. For example, $x_1$ and $x_1$ can be
different elements of $L_1$, so that $[x_1,\, x_1]$ can be a nonzero element of
$L_2$. These indices will be considered modulo~$n$; for example, $a_{-i}\in
L_{-i}=L_{n-i}$.
\end{Index Convention}

Note that under the Index Convention
a $\varphi$-homo\-ge\-neous commutator
 belongs to the $\varphi$-component $L_s$, where
 $s$ is the sum modulo $n$ of the indices of all the elements occurring in this
commutator.

Since the kernel $F$ of the Frobenius group $FH$ is cyclic,
the complement $H$ is also cyclic. Let $H= \langle h
\rangle$ be of order $q$ and $\varphi^{h^{-1}} = \varphi^{r}$ for some $1\leq
r \leq n-1$. Then $r$ is a primitive $q$-th root of unity in the ring ${\Bbb Z}/n {\Bbb Z}$.

The group $H$ permutes the $\varphi$-components $L_i$ as follows:
$L_i^h = L_{ri}$ for all $i\in \Bbb Z/n\Bbb Z$.
Indeed, if $x_i\in L_i$, then $(x_i^{h})^{\varphi} =
x_i^{h\varphi h^{-1}h} = (x_i^{\varphi^{r}})^h =\omega^{ir}x_i^h$, so that
$L_i^h\subseteq L_{ir}$; the reverse inclusion is obtained by applying the same argument to $h^{-1}$.

\begin{notation} In what follows, for a given $u_k\in L_k$ we denote the element
$u_k^{h^i}$ by $u_{r^ik}$ under the Index Convention,
since $L_k^{h^i} = L_{r^ik}$. We denote the $H$-orbit of an element $x_i$ by
$O(x_{i})=\{x_{i},\,\, x_{ri},\dots, x_{r^{q-1}i}\}$.
\end{notation}

 We are going to prove a combinatorial consequence of
the Makarenko--Khukhro--Shumyatsky theorem in~\cite{khu-ma-shu},
which we state in a somewhat different form, in terms of $({\Bbb Z} /n{\Bbb Z}
)$-graded Lie rings with a cyclic group of automorphisms $H$.

\begin{theorem} [{\cite[Theorem 5.5~(b)]{khu-ma-shu}}]\label{kh-ma-shu10-1}
Let $M=\bigoplus _{i=0}^{n-1} M_i$ be a $({\Bbb Z} /n{\Bbb Z} )$-graded Lie
ring with grading components $M_i$ that are additive subgroups
satisfying the inclusions $[M_i,M_j]\subseteq M_{i+j\,({\rm
mod}\,n)}$. Suppose  $M$ admits a finite cyclic group of
automorphisms $H=\langle h\rangle$ of order $q$ such that
$M_i^h=M_{ri}$ for some element $r\in {\Bbb Z}/n{\Bbb Z}$ having
multiplicative order~$q$. If $M_0=0$ and $C_M(H)$ is nilpotent of class $c$,
then for some functions $u=u(c,q)$ and $f=f(c,q)$ depending only
on $c$ and $q$, the Lie subring $n^{u}M$ is nilpotent of class
$f-1$, that is, $\gamma_{f}(n^{u}M)=n^{uf}\gamma _{f}(M)=0$.
\end{theorem}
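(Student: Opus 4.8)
The plan is to prove the graded statement directly, first using the fixed-point-free grading to pin down the solvable structure and then using the hypothesis on $C_M(H)$ to force nilpotency with a bound independent of $n$. It is convenient to keep the automorphism picture in the background: extending scalars to $\widetilde M=M\otimes _{\mathbb Z}\mathbb Z[\omega]$ with $\omega$ a primitive $n$-th root of unity, the operator acting as $\omega ^i$ on each $\widetilde M_i=M_i\otimes \mathbb Z[\omega]$ is, by the grading inclusions $[M_i,M_j]\subseteq M_{i+j}$, an automorphism $\varphi$ whose eigenspaces are exactly the $\widetilde M_i$. The condition $M_0=0$ then says that $\varphi$ is fixed-point-free, since $C_{\widetilde M}(\varphi )=\widetilde M_0=M_0\otimes \mathbb Z[\omega]=0$, and $M_i^h=M_{ri}$ with $r$ of multiplicative order $q$ records how the cyclic group $H$ permutes the eigenspaces. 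Thus the situation is that of a fixed-point-free cyclic grading together with a compatible action of $H$ whose fixed ring $C_M(H)$ is nilpotent of class $c$.

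First I would extract solubility. Since $\varphi$ is a fixed-point-free automorphism of $\widetilde M$ of order dividing $n$, Kreknin's theorem gives that $\widetilde M$, and hence $M$, is soluble of derived length bounded by a function of $n$. By itself this bound degrades with $n$ and is useless for the desired uniform conclusion; its role is only to provide a scaffold on which to run an induction. The real work is then to show that, against this soluble background, the nilpotency of $C_M(H)$ propagates to the whole ring with a bound depending on $c$ and $q$ alone.

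The heart of the argument, and the step I expect to be by far the hardest, is establishing this uniform-in-$n$ nilpotency bound. The mechanism is a combinatorial analysis of $\varphi$-homogeneous commutators: summing such a commutator over an $H$-orbit, $x+x^h+\dots +x^{h^{q-1}}$, produces an element of the fixed ring $C_{\widetilde M}(H)$, so the class-$c$ bound there forces suitable long products to vanish, and one propagates this vanishing to arbitrary $\varphi$-homogeneous commutators modulo controlled powers of $n$. The assumption that $r$ has full multiplicative order $q$ is used in an essential way to keep these orbit sums nondegenerate and to make the fixed-point information bear on every nonzero eigenspace. The powers $n^u$ that survive are precisely the denominators introduced when projecting an element onto its $\varphi$-eigencomponents, reflecting the inclusion recalled above of $n$ times the extended ring in the sum of its $\varphi$-components. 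Carrying this through yields functions $u=u(c,q)$ and $f=f(c,q)$, crucially independent of $n$, with $n^u\widetilde M$ nilpotent of class $f-1$; restricting to $M=M\otimes 1$ and using the identity $\gamma _f(n^uM)=n^{uf}\gamma _f(M)$ then gives the stated conclusion $\gamma _f(n^uM)=n^{uf}\gamma _f(M)=0$.
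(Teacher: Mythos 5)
There is a genuine gap, and it is worth being clear about its nature. In the paper this statement is not proved at all: it is imported verbatim as Theorem~5.5(b) of Khukhro--Makarenko--Shumyatsky, with only the remark that the proof there uses nothing beyond the grading properties of the $\varphi$-components, so the $({\Bbb Z}/n{\Bbb Z})$-graded formulation is what was actually established in that reference. Your proposal, by contrast, undertakes to prove the theorem from scratch --- and then, at exactly the point where the theorem lives, substitutes a description for an argument. The passage ``the class-$c$ bound there forces suitable long products to vanish, and one propagates this vanishing to arbitrary $\varphi$-homogeneous commutators modulo controlled powers of $n$'' is not a step of a proof; it is a restatement of the conclusion. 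The only concrete input you extract from the hypothesis on $C_M(H)$ is that orbit sums $x+x^h+\dots+x^{h^{q-1}}$ of homogeneous elements lie in the fixed ring, so that $\gamma_{c+1}$ of such sums vanishes. Expanding such a product multilinearly gives one relation among a large collection of homogeneous commutators with permuted indices, and there is no straightforward way to isolate from it the vanishing of an individual homogeneous commutator of bounded weight: that extraction is precisely the hard combinatorial content of the cited theorem, carried out there through an elaborate induction (a Kreknin-type derived-length descent combined with graded-centralizer arguments and careful bookkeeping of which powers of $n$ accumulate), and none of that machinery appears in your sketch. Your own phrase ``the step I expect to be by far the hardest'' concedes the point: what is missing is not a detail but the theorem itself.

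Two smaller remarks. First, the reduction you open with --- passing to $\widetilde M=M\otimes_{\Bbb Z}{\Bbb Z}[\omega]$ and realizing the grading as the eigenspace decomposition of an automorphism $\varphi$ --- runs in the opposite direction to what the paper needs: the paper's point is that the automorphism formulation reduces to the graded one, and the graded statement is taken as the primitive result; re-encoding the grading as an automorphism buys nothing and slightly muddies where the powers of $n$ come from (in the graded setting the sum $\bigoplus M_i$ is already direct, so no projection denominators arise at that stage; the powers of $n$ in $u(c,q)$ originate inside the combinatorial argument, not from eigencomponent projection). Second, the appeal to Kreknin's theorem for a soluble ``scaffold'' of $n$-bounded derived length is legitimate and is indeed an ingredient of the real proof, but as you note it cannot by itself yield any bound independent of $n$, so it does not close the gap.
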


The corresponding theorems  in \cite{khu-ma-shu} were stated about
Lie rings admitting a Frobenius group $FH$ of automorphisms with
cyclic kernel $F=\langle\varphi \rangle$ of order $n$. After extension
of the ground ring,  the $\varphi $-components behave like components of
a $({\Bbb Z} /n{\Bbb Z} )$-grading, as we saw above. In fact, the proofs in
\cite{khu-ma-shu} only used the `grading' properties of the
$\varphi $-components, so that Theorem~\ref{kh-ma-shu10-1} was actually
proved therein.
The following proposition is a combinatorial consequence of this theorem.

\begin{proposition}\label{combinatorial}
Let $f=f(c,q)$, $u=u(c,q)$ be the functions in
Theorem~\ref{kh-ma-shu10-1}. Suppose that a Lie ring $L$ admits a
Frobenius group of automorphisms $FH$ with cyclic kernel
$F=\langle \varphi  \rangle$ of order $n$ and with complement $H$ of
order $q$ such that the fixed-point subring $C_L(H)$ of the
complement is nilpotent of class $c$.
 Then for the $(c,q)$-bounded number $w=(u+1)f$ %!!pust'
the $n^w$-th multiple
 $n^w [x_{i_1}, x_{i_2},\ldots, x_{i_{f}}]$ of every simple
$\varphi$-homo\-ge\-neous
commutator in $\widetilde L= L\otimes _{{\Bbb Z} }{\Bbb Z} [\omega ]$
of weight $f$ with non-zero indices can be
represented as a linear combination of $\varphi$-homo\-ge\-neous
commutators of the same weight $f$ in elements of the union of $H$-orbits
$\bigcup_{s=1}^f O(x_{i_s})$ each of which contains a subcommutator
with zero sum of indices modulo $n$.
\end{proposition}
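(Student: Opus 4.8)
The plan is to deduce the statement from the nilpotency conclusion of Theorem~\ref{kh-ma-shu10-1} by a universal (free Lie ring) argument: I will manufacture an abstract $(\mathbb{Z}/n\mathbb{Z})$-graded Lie ring to which that theorem applies, read off the resulting identity, and then transport it into $\widetilde L$ along an equivariant specialization. The obstruction this construction must overcome is that Theorem~\ref{kh-ma-shu10-1} requires a \emph{genuine} grading with $M_0=0$ and with $C_M(H)$ nilpotent of class $c$, whereas in $\widetilde L$ the $\varphi$-components only form a quasi-grading (their sum is not direct, and merely $n\widetilde L\subseteq\sum_i L_i$) and $L_0$ need not vanish. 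The role of the free object is precisely to restore an honest grading in which the theorem can be invoked.

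First I would let $\mathcal F$ be the free Lie ring over $\mathbb{Z}[\omega]$ on free generators $y_{r^t i_s}$ indexed by the elements of the orbits $O(x_{i_1}),\dots,O(x_{i_f})$, graded by the sum of indices modulo $n$ and carrying the action of $H=\langle h\rangle$ permuting generators via $y_k^h=y_{rk}$; then $\mathcal F$ is honestly $(\mathbb{Z}/n\mathbb{Z})$-graded with $\mathcal F_k^h=\mathcal F_{rk}$. I would pass to the quotient $M=\mathcal F/(I_0+I_c)$ by the two homogeneous ideals $I_0={}_{\rm id}\!\langle\mathcal F_0\rangle$ and $I_c$, the homogeneous ideal generated by $\gamma_{c+1}(C_{\mathcal F}(H))$. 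Killing $\mathcal F_0$ forces $M_0=0$. The key structural point, which makes the hypotheses verifiable, is that in a graded quotient the grading lets one read off homogeneous components, so that $C_M(H)$ is spanned by the $H$-orbit sums of homogeneous elements and therefore equals the image of $C_{\mathcal F}(H)$ under the natural surjection; consequently $\gamma_{c+1}(C_M(H))$ is the image of $\gamma_{c+1}(C_{\mathcal F}(H))$ and hence vanishes, so $C_M(H)$ is nilpotent of class at most $c$. Theorem~\ref{kh-ma-shu10-1} now applies and gives $n^{uf}\gamma_f(M)=0$; in particular $n^{uf}[y_{i_1},\dots,y_{i_f}]$ lies in $I_0+I_c$ inside $\mathcal F$.

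It remains to specialize along the grading- and $H$-equivariant homomorphism $\Phi\colon\mathcal F\to\widetilde L$ determined by $y_k\mapsto x_k$. Under $\Phi$ the image of $I_0$ is the $\mathbb{Z}[\omega]$-span of $\varphi$-homogeneous commutators each containing a subcommutator lying in $L_0$, i.e.\ one with zero sum of indices modulo $n$ --- exactly the commutators permitted in the conclusion. The generators of $I_c$ map into $\gamma_{c+1}(C_{\widetilde L}(H))$, which is zero since $C_{\widetilde L}(H)=C_L(H)\otimes_{\mathbb{Z}}\mathbb{Z}[\omega]$ inherits nilpotency class $c$ from $C_L(H)$. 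The one lossy step, and the main obstacle of the proof, is that $I_c$ is homogeneous while its generators are only $H$-invariant: the homogeneous components of an element $g\in\gamma_{c+1}(C_{\mathcal F}(H))$ map to genuine $\varphi$-eigenvectors $\Phi(g_s)\in L_s$ whose sum $\Phi(g)=0$ vanishes, and because the $\varphi$-components do not sum directly one can only conclude, by applying the Vandermonde matrix $(\omega^{js})$ (equivalently $\varphi^{j}$) and taking norms, that each $\Phi(g_s)$ is annihilated by a bounded power of $n$ rather than by zero. Absorbing these extra factors of $n$ into the estimate is what upgrades the exponent from $uf$ to $w=(u+1)f$ and yields the required representation of $n^{w}[x_{i_1},\dots,x_{i_f}]$.
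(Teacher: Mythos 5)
Your proof is correct and its skeleton is the same as the paper's --- a free Lie ring on formal generators indexed by the $H$-orbits, an honest $(\mathbb{Z}/n\mathbb{Z})$-grading by index sums, a quotient by two homogeneous ideals to which Theorem~\ref{kh-ma-shu10-1} applies, and an $FH$-equivariant specialization into $\widetilde L$ --- but you handle the ideal coming from $\gamma_{c+1}(C(H))$ in a genuinely different, essentially dual, way, and the comparison is instructive. The paper forms the smallest $F$-invariant ideal $I\supseteq\gamma_{c+1}(C_K(H))$ and cuts it down to the \emph{largest} homogeneous ideal $\hat I=\bigoplus_k(I\cap K_k)$ it contains; the gain is that $\hat I\subseteq I$ specializes to $0$ exactly, and the price is that in $N=K/(J+\hat I)$ one only gets $n\gamma_{c+1}(C_N(H))=0$, forcing the passage to $M=nN$ and costing the factor $n^{f}$ via $\gamma_f(nN)=n^f\gamma_f(N)$. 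You instead take the \emph{smallest} homogeneous ideal $I_c$ containing $\gamma_{c+1}(C_{\mathcal F}(H))$, generated by its homogeneous components; the gain is that $C_M(H)$ is nilpotent of class $c$ on the nose and the theorem applies to $\mathcal F/(I_0+I_c)$ directly, and the price is that $\Phi(I_c)$ is only $n$-torsion rather than zero, because for $g=\sum_s g_s$ with $\Phi(g)=0$ the quasi-grading of $\widetilde L$ yields only $n\Phi(g_s)=0$. Your bookkeeping is in fact slightly sharper: a single extra factor of $n$ suffices, so you obtain the conclusion with exponent $uf+1\le(u+1)f$. Two small points you should make explicit: $I_0$ and $I_c$ are homogeneous for the weight ($\mathbb{N}$-) grading as well, which is what guarantees that the resulting linear combination consists of commutators of weight exactly $f$; and the identification of $C_M(H)$ with the image of $C_{\mathcal F}(H)$ rests on $H$ permuting the nonzero grading components in regular orbits of length $q$, i.e.\ on the Frobenius condition $r^ji\not\equiv i\pmod n$ for $i\not\equiv 0$.
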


\begin{remark} Similar combinatorial propositions
were also proved for Lie algebras in \cite{mak-khu13} and for Lie rings whose
 ground ring contains the inverse
 of $n$ in \cite{khu-mak13}.
\end{remark}

\begin{proof}
The idea of the proof is application of
Theorem~\ref{kh-ma-shu10-1} to a free Lie ring with operators
$FH$. Given arbitrary (not necessarily
distinct) non-zero
 elements $i_1, i_2,\dots, i_f\in \Bbb Z /n{\Bbb Z}$, we
consider a free Lie ring $K$ with $qf$ free generators in
the set
$$Y=\{\underbrace{y_{i_1}, y_{ri_1}, \ldots, y_{r^{q-1}i_1}}_{O(y_{i_1})},\,\,\,
\underbrace{y_{i_2}, y_{ri_2}, \ldots,
y_{r^{q-1}i_2}}_{O(y_{i_2})},\ldots,
\underbrace{y_{i_{f}},y_{ri_f}, \ldots,
y_{r^{q-1}i_f}}_{O(y_{i_f})}\},$$
where indices  are formally assigned and regarded modulo $n$ and the subsets
$O(y_{i_s})=\{y_{i_s}, y_{ri_s}, \ldots, y_{r^{q-1}i_s}\}$ are disjoint.
Here, as in the Index Convention, we do not use numbering indices, that is,
all elements $y_{r^ki_j}$ are by definition different free generators, even if
indices coincide. (The Index Convention will come into force in a moment.) For
every $i=0,\,1,\,\dots ,n-1$ we define
the additive subgroup $K_i$ generated by all
commutators in the generators $y_{j_s}$ in which the sum of
indices of all entries is equal to $i$ modulo $n$. Then
$K=K_0\oplus K_1\oplus \cdots \oplus K_{n-1}$. It is also obvious that
$ [K_i,K_j]\subseteq K_{i+j\,({\rm mod\, n)}}$; therefore this is a
$({\Bbb Z} /n{\Bbb Z})$-grading. The Lie ring $K$ also has
the natural ${\Bbb N}$-grading $K=G_1(Y)\oplus G_2(Y)\oplus \cdots $
with respect to the generating set $Y$, where $G_i(Y)$ is the additive subgroup generated by all
commutators of weight $i$ in elements of $Y$.

We define an action of
the Frobenius group $FH$ on $K$ by setting
$k_i^{\varphi}=\omega^i k_i$ for $k_i\in K_i$ and extending this
action to $K$ by linearity.
An action of $H$ is defined on the generating set $Y$
as a cyclic permutation of elements in each subset $O(y_{i_s})$ by the rule
$(y_{r^ki_s})^h=y_{r^{k+1}i_s}$ for $k=0,\ldots, q-2$ and $(y_{r^{q-1}i_s})^h=y_{i_s}$.
Then $O(y_{i_s})$ becomes the $H$-orbit
of the element~$y_{i_s}$. Clearly, $H$ permutes the components $K_i$ by the rule
$K_i^h = K_{ri}$ for all $i\in \Bbb Z/n\Bbb Z$.

Let $J={}_{{\rm id}}\!\left< K_0 \right>$ be the ideal
generated by the $\varphi$-component $K_0$. Clearly,
the ideal $J$ consists of linear combinations of commutators
in elements of $Y$ each of which contains a subcommutator with
zero sum of indices modulo $n$. The
ideal $J$
is generated by homogeneous elements with respect to the gradings
$K=\bigoplus_i G_i(Y)$ and $K=\bigoplus_{i=0}^{n-1} K_i$ and therefore
is homogeneous with respect to both gradings.
Note also that the ideal $J$ is obviously $FH$-invariant.

Let $I={}_{{\rm id}}\!\left< \gamma_{c+1}(C_K(H)) \right>^F$ be
the smallest $F$-invariant ideal containing the subring
$\gamma_{c+1}(C_K(H))$. The ideal $I$ is obviously homogeneous
with respect to the grading $K=\bigoplus_i G_i(Y)$ and is
$FH$-invariant. The fact that the ideal $I$ is $F$-invariant,
implies that $nI\subseteq I_0\oplus \dots \oplus I_{n-1}$, where
$I_k=I\cap K_k$ for $k=0,1,\dots ,n-1$. Indeed, for $z\in I$, for
every $i=0,\ldots, n-1$ we have $z_i:= \sum_{s=0}^{n-1}
\omega^{-is}z^{\varphi^s}\in K_i$ and $nz=\sum_{i=0}^{n-1}z_i$. We
denote $\hat I=I_0\oplus \dots \oplus I_{n-1}$. This is an ideal
of $K$, which is homogeneous with respect to both  gradings
$K=\bigoplus_i G_i(Y)$ and $K=\bigoplus_{i=0}^{n-1} K_i$. It is
also $FH$-invariant, since $I$ is $FH$-invariant and the
components $K_i$ are permuted by $FH$.

Consider the quotient Lie ring $N=K/(J+\hat I)$. Since the ideals $J$ and $\hat I$
are homogeneous with respect to the gradings $K=\bigoplus_i G_i(Y)$ and
$K=\bigoplus_{i=0}^{n-1} K_i$, the quotient ring $N$ has
the corresponding induced gradings.
We use indices to denote the components $N_i$
of the $({\Bbb Z} /n{\Bbb Z})$-grading induced by $K=\bigoplus_{i=0}^{n-1} K_i$.
Note that $N_0=0$ by the construction of $J$.

The group $H$ permutes the grading components of $N=N_1\oplus\dots \oplus N_{n-1}$ with regular orbits of length $q$.
Therefore elements of $C_N(H)$ have the form $a+a^h+\dots +a^{h^{q-1}}$. Hence $C_N(H)$ is contained
in the image of $C_K(H)$ in $N=K/(J+\hat I)$ and therefore $\gamma_{c+1}(C_N(H)) $ is contained in
the image of
the ideal $I$ by its construction. Then $n\gamma_{c+1}(C_N(H))=0 $, since $nI\subseteq \hat I$.

The group $H$ also permutes the $({\Bbb Z} /n{\Bbb Z})$-grading components of $M:
=nN=\bigoplus _{i=0}^{n-1}M_i$, where $M_i=nN_i$,
 with regular orbits of length $q$. Therefore, $C_M(H)=nC_N(H)$ and  $\gamma_{c+1}(C_M(H))=
 \gamma_{c+1}(nC_N(H))=n^{c+1} \gamma_{c+1}(C_N(H))=0 $.

Since $N_0=0$, we also have $M_0=0$.

By Theorem~\ref{kh-ma-shu10-1} for some $(c,q)$-bounded function
$u=u(c,q)$  the Lie ring $n^{u}M$ is nilpotent of $(c,q)$-bounded
class $f-1=f(c,q)-1$. Consequently,
$$
n^{(u+1)f}[y_{i_1}, y_{i_2},\ldots, y_{i_{f}} ]=[n^{u+1}y_{i_1},
n^{u+1}y_{i_2},\ldots, n^{u+1}y_{i_{f}} ]\in J+\hat I.
$$
Note that we should take  the factors $n^{u+1}$ because the
elements $y_{i_s}\in K$ may %!!
not belong to the preimage of $M=nN$. %!!OK
 Since both ideals $J$ and $\hat I$
are homogeneous with respect to the grading $K=\bigoplus_i
G_i(Y)$, this means that the left-hand side is equal modulo the
ideal $\hat I$ to a linear combination of commutators of the same
weight $f$ in elements of $Y$ each of which contains a
subcommutator with zero sum of indices modulo $n$.

Now suppose that $L$ is an arbitrary Lie ring
 satisfying
the hypothesis of Proposition~\ref{combinatorial}, and let $\widetilde L =
L\otimes _{{\Bbb Z} }{\Bbb Z} [\omega ]$. Let $x_{i_1},
x_{i_2},\ldots, x_{i_{f}} $ be arbitrary $\varphi$-homo\-ge\-neous
elements of
$\widetilde L$.
We define the homomorphism $\delta$ from the free Lie ring $K$
into $\widetilde L$ extending the mapping
$$
y_{r^ki_s}\to x_{i_s}^{h^k}\quad \text{for} \quad
s=1,\ldots,f \quad \text{and}\quad k=0,1,\ldots,q-1.
$$
It is easy to see that $\delta$ commutes with the action of $FH$ on $K$ and $\widetilde L$. Therefore
$\delta (O(y_{i_s}))=O(x_{i_s})$ and
$\delta (I)=0$, since $\gamma_{c+1}(C_{\widetilde L} (H)) =0$ and $\delta(C_K(H))\subseteq C_{\widetilde L}(H)$.
We now apply $\delta$ to the representation
 of  $n^{(u+1)f} [y_{i_1}, y_{i_2},\ldots, y_{i_{f}}]$ constructed above. Since $\delta (\hat I)\subseteq \delta(I)=0$,
 as the image we
 obtain a required
 representation of  $n^{(u+1)f}[x_{i_1},x_{i_2},\ldots, x_{i_{f}} ]$ as a linear
combination of commutators of weight $f$
in elements of the set $\delta (Y)=\bigcup_{s=1}^f O(x_{i_s})$
each of which
has a subcommutator with zero sum of indices
modulo $n$.
\end{proof}

\section{Nilpotency class}

We begin with two lemmas that are  well-known in folklore. Induced
automorphisms of invariant subgroups and sections are denoted by
the same letters. Fixed-point subgroups are denoted as
centralizers in the natural semidirect products.

\begin{lemma}[see, e.~g., {\cite[Theorem 1.6.1]{kh4}}]\label{l1} If $\alpha$ is
an automorphism of a finite group $G$ and $N$ is an
$\alpha$-invariant subgroup of $G$, then $|C_{G/N}(\alpha )|\leq
|C_G(\alpha )|$. \qed \end{lemma}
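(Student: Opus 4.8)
The plan is to count the $\alpha$-fixed cosets directly by analysing the \emph{displacement map} $\psi\colon G\to G$, $\psi(g)=g^{-1}g^{\alpha}$. (For $G/N$ to carry an action of $\alpha$ at all, I read the hypothesis as saying that $N$ is a normal $\alpha$-invariant subgroup.) The starting observation is that a coset $gN$ lies in $C_{G/N}(\alpha)$ precisely when $g^{\alpha}N=gN$, i.e.\ when $\psi(g)=g^{-1}g^{\alpha}\in N$. Thus the preimage $\psi^{-1}(N)$ is exactly the union of those cosets of $N$ that are fixed by $\alpha$, and I would compute its cardinality in two different ways.

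First I would check that $\psi^{-1}(N)$ is genuinely a union of full $N$-cosets: if $\psi(g)\in N$ and $n\in N$, then for $g'=gn$ a direct computation gives $\psi(g')=n^{-1}\psi(g)n^{\alpha}$, which again lies in $N$ because $N$ is an $\alpha$-invariant subgroup. Since the cosets making up $\psi^{-1}(N)$ are precisely the $\alpha$-fixed ones, this yields $|\psi^{-1}(N)|=|C_{G/N}(\alpha)|\cdot|N|$.

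Second, I would determine the fibres of $\psi$. The key identity is that $\psi(g)=\psi(h)$ forces $hg^{-1}=(hg^{-1})^{\alpha}$, so that the fibre through $g$ is exactly the coset $C_G(\alpha)g$; hence every nonempty fibre has size $|C_G(\alpha)|$. Writing $\psi^{-1}(N)$ as the disjoint union of the fibres over the values actually attained inside $N$, namely over $N\cap\psi(G)$, gives the second count $|\psi^{-1}(N)|=|N\cap\psi(G)|\cdot|C_G(\alpha)|$.

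Comparing the two expressions yields $|C_{G/N}(\alpha)|\cdot|N|=|N\cap\psi(G)|\cdot|C_G(\alpha)|$, and since $|N\cap\psi(G)|\le|N|$ the claimed inequality $|C_{G/N}(\alpha)|\le|C_G(\alpha)|$ follows at once. The only delicate points are purely formal: verifying that $\psi$ behaves correctly under right translation by $N$ (which uses the $\alpha$-invariance of $N$) and that all nonempty fibres have the constant size $|C_G(\alpha)|$. I expect no real obstacle—the statement is elementary and needs neither coprimeness nor any solubility hypothesis—so the main task is simply to organise this double count cleanly.
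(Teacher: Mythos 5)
Your double count is correct: the fibres of $\psi(g)=g^{-1}g^{\alpha}$ are exactly the cosets $C_G(\alpha)g$, the set $\psi^{-1}(N)$ is the union of the $\alpha$-fixed cosets of $N$, and comparing $|C_{G/N}(\alpha)|\cdot|N|=|N\cap\psi(G)|\cdot|C_G(\alpha)|\le |N|\cdot|C_G(\alpha)|$ gives the claim. The paper states this lemma without proof, citing \cite[Theorem 1.6.1]{kh4}, and your argument is precisely the classical one given there; note only that it nowhere uses normality of $N$, so it in fact bounds the number of $\alpha$-fixed points on the coset space $G/N$ for an arbitrary $\alpha$-invariant subgroup.
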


\begin{lemma}[see, e.~g., {\cite[Theorem 1.6.2]{kh4}}]\label{l2-coprime} %OK
If $\alpha$ is an automorphism of a finite group $G$ and $N$ is an
$\alpha$-invariant subgroup of $G$ such that $(|N|, |\alpha|)=1$, %zapiataja
 then
$C_{G/N}(\alpha )= C_G(\alpha )N/N$. \qed \end{lemma}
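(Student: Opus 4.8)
The plan is to prove the two inclusions separately, the first being immediate and the second carrying all the weight of the coprimality hypothesis via Schur--Zassenhaus. Throughout I regard $N$ as a \emph{normal} $\alpha$-invariant subgroup, as is needed for $G/N$ to be a group on which $\alpha$ acts. The easy direction is direct: if $g\in C_G(\alpha)$ then $(gN)^\alpha=g^\alpha N=gN$, so the coset $gN$ is $\alpha$-fixed, giving $C_G(\alpha)N/N\subseteq C_{G/N}(\alpha)$.

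For the reverse inclusion I would pass to the semidirect product $\widehat G=G\rtimes\langle\alpha\rangle$, inside which conjugation by $\alpha$ realizes the given automorphism. Take $g\in G$ with $gN\in C_{G/N}(\alpha)$, that is, $c:=g^{-1}g^\alpha\in N$. A short computation in $\widehat G$ shows that $g\alpha g^{-1}$ differs from $\alpha$ only by an element of $N$, namely $g\alpha g^{-1}\alpha^{-1}=g(g^\alpha)^{-1}=gc^{-1}g^{-1}\in N$. Hence both $\langle\alpha\rangle$ and $g\langle\alpha\rangle g^{-1}$ lie in the subgroup $N\langle\alpha\rangle=N\rtimes\langle\alpha\rangle$, and each is a complement to $N$ there: its order $|\alpha|$ is coprime to $|N|$, so it meets $N$ trivially and has the correct index.

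The decisive step is then to invoke the conjugacy part of the Schur--Zassenhaus theorem in $N\rtimes\langle\alpha\rangle$: since $|N|$ is coprime to the index $|\alpha|$, all complements to $N$ are conjugate by an element of $N$. This yields $n\in N$ with $(ng)\langle\alpha\rangle(ng)^{-1}=\langle\alpha\rangle$. Setting $t=ng\in G$ and inspecting the $G$-component of $t\alpha t^{-1}$ inside $\widehat G$ — where $\langle\alpha\rangle\cap G\alpha=\{\alpha\}$ forces the $G$-component to vanish — one gets $t\alpha t^{-1}=\alpha$, i.e.\ $t\in C_G(\alpha)$. As $g=n^{-1}t$ with $n\in N$ and $N\trianglelefteq G$, the coset $gN=tN$ lies in $C_G(\alpha)N/N$, which completes the argument.

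I expect the main obstacle to be precisely the conjugacy of complements, as this is the only place the assumption $(|N|,|\alpha|)=1$ is used. Its validity rests on the fact that of the two coprime orders $|N|$ and $|\alpha|$ at least one is odd, so one of the factors is solvable (appealing to Feit--Thompson if necessary) and the conjugacy assertion of Schur--Zassenhaus applies unconditionally. This is exactly the coprime-action phenomenon that allows a fixed coset in the quotient to be corrected, by an element of $N$, to a genuine fixed point in $G$.
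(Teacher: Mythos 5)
Your argument is correct and is precisely the standard proof of this folklore lemma: the paper gives no proof of its own, only the citation to \cite[Theorem 1.6.2]{kh4}, where the same semidirect-product argument (two complements to $N$ in $N\langle\alpha\rangle$, conjugate by an element of $N$ via Schur--Zassenhaus, then the coset computation forcing $t\alpha t^{-1}=\alpha$) is used. The only superfluous step is the appeal to Feit--Thompson: $\langle\alpha\rangle$ is cyclic, hence solvable, so the conjugacy part of Schur--Zassenhaus applies without it.
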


\begin{lemma}[see, e.~g., {\cite[Corollary 1.7.4]{kh4}}]\label{l-b-r} If
$\varphi $ is an automorphism of order $p^k$ of a finite abelian
$p$-group $A$ and $|C_A(\varphi  )|=p^s$, then the rank of $A$ is at
most $sp^k$. \end{lemma}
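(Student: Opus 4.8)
The plan is to reduce to linear algebra over the prime field $\mathbb{F}_p$ and then read the bound off the Jordan form of the induced operator. For a finite abelian $p$-group $A$ the rank coincides with the minimal number of generators, which equals $\dim_{\mathbb{F}_p}(A/pA)$ (the Frattini subgroup of $A$ being $pA$). So I would first pass to the elementary abelian quotient $V=A/pA$. Since $pA$ is characteristic, it is $\varphi$-invariant, and $\varphi$ induces an automorphism $\bar\varphi$ of $V$ with $\bar\varphi^{p^k}=1$; by Lemma~\ref{l1} applied with $N=pA$ we get $|C_V(\bar\varphi)|\leq|C_A(\varphi)|=p^s$, so $\dim_{\mathbb{F}_p}C_V(\bar\varphi)\leq s$. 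It then suffices to prove the following linear-algebra statement: if $V$ is a finite-dimensional $\mathbb{F}_p$-space with an operator $\bar\varphi$ satisfying $\bar\varphi^{p^k}=1$ and $\dim C_V(\bar\varphi)\leq s$, then $\dim V\leq sp^k$.

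For this I would set $N=\bar\varphi-\mathrm{id}$. Since $\bar\varphi$ and $\mathrm{id}$ commute and we are in characteristic $p$, the identity $(\bar\varphi-\mathrm{id})^{p^k}=\bar\varphi^{p^k}-\mathrm{id}=0$ shows that $N$ is nilpotent of index at most $p^k$. Moreover $\ker N=\ker(\bar\varphi-\mathrm{id})=C_V(\bar\varphi)$, so $\dim\ker N\leq s$.

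I would then invoke the Jordan canonical form of the nilpotent operator $N$ over $\mathbb{F}_p$ (equivalently, decompose $V$ as a module over the truncated polynomial ring $\mathbb{F}_p[t]/(t^{p^k})$, with $t$ acting as $N$, into indecomposable summands $\mathbb{F}_p[t]/(t^{j})$ with $1\leq j\leq p^k$). The number of Jordan blocks equals $\dim\ker N=\dim C_V(\bar\varphi)\leq s$, while each block has size at most the nilpotency index of $N$, hence at most $p^k$. As $\dim V$ is the sum of the block sizes, this yields $\dim V\leq s\cdot p^k$, and therefore $\operatorname{rank}(A)=\dim_{\mathbb{F}_p}V\leq sp^k$.

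This is a folklore computation, so I do not anticipate a substantial obstacle; the one step that genuinely needs care is the reduction to $V=A/pA$. Over the original group $A$, which is a module over $\mathbb{Z}/p^m\mathbb{Z}$ rather than over a field, one cannot directly apply Jordan form, so the decisive move is to descend to the prime field while verifying via Lemma~\ref{l1} that this can only shrink the fixed-point space. Everything after that reduction is the standard count of Jordan block sizes against their number.
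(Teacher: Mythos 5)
Your argument is correct: the paper gives no proof of this lemma, simply citing \cite[Corollary~1.7.4]{kh4}, and your reduction to $A/pA$ followed by the Jordan block count for the nilpotent operator $\bar\varphi-\mathrm{id}$ (at most $s$ blocks, each of size at most $p^k$) is exactly the standard argument behind that citation. The only cosmetic alternative is to pass to $\Omega_1(A)$ instead of $A/pA$, which avoids invoking Lemma~\ref{l1} since one then has $C_{\Omega_1(A)}(\varphi)\leq C_A(\varphi)$ directly; both reductions are equally valid.
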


The following lemma is a well-known consequence of the theory of powerful $p$-groups \cite{lu-ma}.

\begin{lemma}[see, e.~g., {\cite[Corollary 11.21]{khu-b2}}]\label{l-r-o} If
a finite $p$-group $P$ has rank $r$ and exponent $p^e$, then $|P|$ is $(p,r,e)$-bounded.
\end{lemma}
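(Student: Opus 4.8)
The plan is to reduce the general case to that of a powerful $p$-group, for which the order can be controlled directly along the series of $p$-th power subgroups. First I would invoke the basic reduction from the theory of powerful $p$-groups \cite{lu-ma}: a finite $p$-group $P$ of rank $r$ contains a powerful characteristic subgroup $Q$ whose index $|P:Q|$ is $(p,r)$-bounded. By the definition of rank, every subgroup of $Q$ is a subgroup of $P$ and so is generated by at most $r$ elements; in particular $d(Q)\le r$, where $d(Q)$ denotes the minimal number of generators of $Q$. Moreover $\exp Q$ divides $\exp P=p^e$. Since $|P|=|P:Q|\,|Q|$, it suffices to show that $|Q|$ is $(p,r,e)$-bounded.

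Next I would exploit the two standard properties of a powerful $p$-group $Q$. The chain of $p$-th power subgroups
$$Q=Q^{p^0}\ge Q^{p^1}\ge Q^{p^2}\ge\cdots$$
is descending, each $Q^{p^i}$ is again powerful, and each factor $Q^{p^i}/Q^{p^{i+1}}$ is elementary abelian. Since the power subgroups of a powerful group are generated by the corresponding powers of any generating set, $d(Q^{p^i})\le d(Q)\le r$, so each factor is generated by at most $r$ elements and hence has order at most $p^r$. Finally, in a powerful $p$-group the set of $p^i$-th powers is exactly the subgroup $Q^{p^i}$; as $\exp Q\le p^e$ this forces $Q^{p^e}=1$, so at most $e$ of the factors above are nontrivial.

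Multiplying the orders of these factors yields $|Q|\le p^{re}$, whence $|P|\le |P:Q|\,p^{re}$ is $(p,r,e)$-bounded, as required. The only nonelementary ingredient --- and the real content of the statement --- is the reduction to a powerful subgroup of $(p,r)$-bounded index; once one is inside a powerful group, the bound is a routine count along the power series. These facts are exactly those assembled in the cited \cite[Corollary 11.21]{khu-b2}.
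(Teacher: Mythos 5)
Your argument is correct and is exactly the standard proof behind this statement: the paper itself gives no proof, quoting it as a known consequence of the Lubotzky--Mann theory (Corollary 11.21 of the cited book), and your reduction to a powerful characteristic subgroup of $(p,r)$-bounded index followed by the count $|Q|\le p^{re}$ along the series $Q\ge Q^p\ge Q^{p^2}\ge\cdots$ with elementary abelian factors of rank at most $r$ is precisely how that corollary is obtained. No gaps.
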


\begin{proof}[Proof of Theorem~\ref{t1}] Recall that $P$ is a finite $p$-group admitting a
Frobenius group $FH$  of automorphisms with cyclic kernel $F=\langle \varphi \rangle$ of
order $p^k$ and complement $H$ of order $q$. Let $p^m=|C_P(F)|$ and let $C_P(H)$ be  nilpotent of class $c$.
We need to find a characteristic subgroup of $(p,k,m,c)$-bounded index and of $(c,q)$-bounded nilpotency class.

Consider the associated Lie ring $L(P)=\bigoplus _i \gamma _i(P)/\gamma
_{i+1}(P)$, where $\gamma _i$ denotes the $i$th term of the lower
central series (see, e.~g., \S\,3.2 in \cite{kh4}). Extend the
ground ring by a $p^k$-th primitive root of unity $\omega$ setting $L=
L(P)\otimes _{{\Bbb Z} }{\Bbb Z} [\omega ]$ and regarding $L(P)$
as $L(P)\otimes 1$. The group $FH$ naturally acts on $L$. We
define the $\varphi $-components as in \S\,\ref{s-l} (with $n=p^k$);
recall that $p^kL\subseteq L_0+L_1+\dots +L_{p^k-1}$. Since any
$\varphi $-homo\-ge\-neous commutator with zero sum of indices modulo
$p^k$ belongs to $L_0$, by Proposition~\ref{combinatorial} we
obtain
$$
p^{k(f+w)}\gamma _f(L)=p^{kw}\gamma _f(p^kL)\subseteq p^{kw}\gamma
_f(L_0+L_1+\dots +L_{p^k-1})\subseteq {}_{\rm id}\langle
L_0\rangle
$$
for the functions $f=f(c,q)$, $w=w(c,q)$ in that proposition.
Since $L_0=C_{L(P)}(\varphi)\otimes _{{\Bbb Z} }{\Bbb Z} [\omega
]$  and $p^mC_{L(P)}(\varphi)=0$  by Lemma~\ref{l1} and the
Lagrange theorem, we obtain
$$
p^{k(f+w)+m}\gamma _f(L)\subseteq p^m{}_{\rm id}\langle L_0\rangle =0.
$$
In particular, $p^{k(f+w)+m}\gamma _f(L(P)) =0$. In terms of the group
$P$ this means that the factors $\gamma _i(P)/\gamma _{i+1}(P)$ have
exponent dividing $p^{k(f+w)+m}$ for all $i\geq f$.

By Lemmas~\ref{l1}  and \ref{l-b-r}, the rank of every factor $\gamma _i(P)/\gamma _{i+1}(P)$ is at most $mp^k$.
Together with the bound for the exponent, this gives a  bound for the order, which we state as a lemma.

\begin{lemma} \label{ner} Suppose that $P$ is a finite $p$-group admitting a
Frobenius group $FH$  of automorphisms with cyclic kernel
$F=\langle \varphi \rangle$ of order $p^k$ and complement $H$ of order
$q$. Let $p^m=|C_P(F)|$ and let $C_P(H)$ be  nilpotent of class
$c$. Then $|\gamma _i(P)/\gamma _{i+1}(P)|\leq p^{(kf+kw+m)mp^k}$ for all
$i\geq f$, where $f=f(c,q)$ and $w=w(c,q)$ are the functions in
Proposition~\ref{combinatorial}. \end{lemma}

Lemma~\ref{ner} can be applied to any $FH$-invariant subgroup $Q$
of $P$. In particular, we choose $Q=\gamma _{U+1}(P\langle \varphi
\rangle)$, where $U=(kf+kw+m)mp^k$. Clearly, $Q\leq P$, so that
$|C_Q(\varphi  )|\leq p^m$. By Lemma~\ref{ner}, $|\gamma _i(Q)/\gamma
_{i+1}(Q)|\leq p^{U}$  for all $i\geq f$. On the other hand, by
the well-known theorem of P.~Hall \cite[Theorem~2.56]{hall} we
have $|\gamma _i(Q)/\gamma _{i+1}(Q)|\geq p^{U+1}$  if $\gamma _{i+1}(Q)\ne
1$. To avoid a contradiction we must conclude that $\gamma
_{f+1}(Q)=1$. Thus, $Q$ is nilpotent of $(c,q)$-bounded class. %OK, pust' bez f

The automorphism $\varphi $ acts trivially on the factors of the lower
central series of $P\langle \varphi  \rangle$. Since $|C_{P\langle \varphi
\rangle}(\varphi  )|=p^{m+k}$, by Lemma~\ref{l1} the orders of all
these factors are at most $p^{m+k}$. Since the quotient
$P\langle \varphi  \rangle /Q$ is nilpotent of class $U$ by
construction, its order is at most
$p^{(m+k)U}=p^{(m+k)(kf+kw+m)mp^k}$, which is a
$(p,k,m,c)$-bounded number. Thus, $Q$ has  $(p,k,m,c)$-bounded
index in $P$ and $(c,q)$-bounded nilpotency class. The subgroup
$Q$ contains a characteristic subgroup $P^{p^e}$ for some
 $(p,k,m,c)$-bounded number $e$. Since the rank of
 $P$ is  $(p,k,m,c)$-bounded, the index of $P^{p^e}$ in $P$ is also $(p,k,m,c)$-bounded  by Lemma~\ref{l-r-o}.
\end{proof}

We now produce  an example showing that the condition of the kernel being cyclic in Theorem~\ref{t1} is essential.

\begin{example}\label{example}
Let $L$ be a Lie ring whose additive group is the direct sum of
three copies of ${\Bbb Z} _2$, the group of $2$-adic integers, with
generators $e_1,e_2,e_3$ as a ${\Bbb Z} _2$-module, and let the
structure constants of $L$ be $[e_1,e_2]=4e_3$, \
$[e_2,e_3]=4e_1$, \ $[e_3,e_1]=4e_2$. A Frobenius group $FH$ of
order $12$ acts on $L$ as follows: $F=\{1,f_1,f_2,f_3\}$, where
$f_i(e_i)=e_i$ and $f_i(e_j)=-e_j$ for $i\ne j$, and $H=\langle
h\rangle$ with $h(e_i)=e_{i+1 \,({\rm mod}\,3)}$. Since $L$ is a
powerful Lie ${\Bbb Z} _2$-algebra, by \cite[Theorem~9.8]{p-ad-anal} the
Baker--Campbell--Hausdorff formula defines the structure of a
uniformly powerful pro-$2$-group $P$ on the same set $L$. For any
positive integer $n$, the quotient of $P$ by $P^{2^n}=2^nL$ is a
finite $2$-group $T$. The induced action of $FH$ on $T$ is such
that $|C_T(F)|=8$ and $C_T(H)$ is cyclic, while the derived length
of $T$ is about $\log _4 n$.
\end{example}

\section{Order, rank, and exponent}
Suppose that a finite abelian group $V$ admits  a Frobenius
group of automorphisms $FH$ with cyclic kernel $F=\langle\varphi \rangle $ of order $n$.
We can extend the ground ring by a primitive $n$-th root of unity $\omega$
forming $W=V\otimes _{{\Bbb Z}}{\Bbb Z} [\omega]$ and define the natural action of the group $FH$ on $W$. As a ${\Bbb Z}$-module (abelian group),
${\Bbb Z} [\omega ]=\bigoplus _{i=0}^{E(n)-1} \omega ^i {\Bbb Z}$, where $E(n)$ is the Euler
function. Hence,
 \begin{equation} \label{euler0}
 W=\bigoplus _{i=0}^{E(n)-1} V\otimes \omega ^i {\Bbb Z},
 \end{equation}
 so that $|W|=|V|^{E(n)}$.  Similarly,
$C_W(\varphi )= \bigoplus _{i=0}^{E(n)-1} C_{V}(\varphi )\otimes \omega ^i {\Bbb Z}$, so that
$|C_W(\varphi )|=|C_{V}(\varphi )|^{E(n)}$.

As in \S\,\ref{s-l}
 for $\widetilde L$, we define $\varphi$-\textit{components} $W_k$ for
$k=0,\,1,\,\ldots ,n-1$ as the `eigensubspaces'
$$W_k=\left\{ a\in W\mid a^{\varphi}=\omega ^{k}a\right\} .$$
Recall that  $W$ is an `almost direct sum' of the $W_i$:
namely,
\begin{equation}\label{eq11}
nW\subseteq W_0+W_1+\dots + W_{n-1}
\end{equation}
and
\begin{equation}\label{eq12}
\text{if}\quad  w_0+w_1+\dots + w_{n-1}=0 \quad\text{for }
w_i\in W_i, \quad \text{then}\quad nw_i=0\quad \text{for all }
i.
\end{equation}
As in \S\,2 we refer to elements  of
$\varphi$-components as being \textit{$\varphi$-homo\-ge\-neous}, and
apply the Index Convention using lower indices of small Latin
letters to only indicate the $\varphi $-component containing this
element.

As before, since the kernel $F$ of the Frobenius group $FH$ is
cyclic, the complement $H$ is also cyclic,  $H= \langle h
\rangle$, say, of order $q$, and $\varphi^{h^{-1}} = \varphi^{r}$
for some $1\leq r \leq n-1$, which is a primitive $q$-th root of
unity in ${\Bbb Z}/n {\Bbb Z}$. The group $H$ permutes the
$\varphi$-components $W_i$ by the rule $W_i^h = W_{ri}$ for
all $i\in \Bbb Z/n\Bbb Z$. For $u_k\in W_k$ we denote $u_k^{h^i}$
by $u_{r^ik}$ under the Index Convention.

From now on we assume in addition that $V$ is an abelian $FH$-invariant section of
the $p$-group $P$ in Theorem~\ref{t-g}. Recall that $|\varphi  |=n=p^k$ and $|C_P(\varphi  )|=p^m$.

\begin{lemma}\label{l-ab}
There is a characteristic subgroup $U$ of $V$ such that $|U|$ is $(p,k,m)$-bounded and

{\rm (a)} $|V/U|\leq |C_V(H)|^{|H|}$;

{\rm (b)} the rank of $V/U$ is at most $r|H|$, where  $r$ is the rank of $C_P(H)$;

{\rm (c)} the exponent of $V/U$ is at most $p^{e}$, where $p^e$ is the exponent of $C_P(H)$.
\end{lemma}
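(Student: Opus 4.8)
The plan is to realize $V$, up to a $(p,k,m)$-bounded characteristic subgroup, as an \emph{almost free} module for the cyclic complement $H$, exploiting that the nonzero $\varphi$-components are permuted regularly by $H$. First I would record the basic reductions. Since $V$ is an $H$-invariant section of $P$ and $|H|=q$ is coprime to $p$ (as $FH$ is Frobenius with $p$-group kernel), Lemma~\ref{l2-coprime} gives $C_{V/N}(H)=C_V(H)N/N$ for every $H$-invariant $N$; together with Lemma~\ref{l1} this shows $C_V(H)$ is a section of $C_P(H)$, so its rank is at most $r$ and its exponent at most $p^{e}$. Also, by Lemma~\ref{l1} we have $|C_V(\varphi )|\le |C_P(\varphi )|=p^{m}$, so Lemma~\ref{l-b-r} bounds the rank of $V$ (hence of $W=V\otimes_{\mathbb Z}{\mathbb Z}[\omega ]$) by the $(p,k,m)$-bounded number $mp^{k}$. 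This bounded rank is exactly what will force every ``defect'' encountered below to have $(p,k,m)$-bounded order.

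Next comes the free-module core. In $W$ the group $H$ permutes the nonzero $\varphi$-components $W_1,\dots ,W_{n-1}$ in regular orbits of length $q$, because $r$ has multiplicative order $q$ modulo $n$. Choosing one representative $i_s$ per orbit, the \emph{external} direct sum $\bigoplus_{i\neq 0}W_i$ is, as an $H$-module, the direct sum of the induced modules built from the $W_{i_s}$; for such a module the orbit-sum map $a\mapsto a+a^{h}+\dots +a^{h^{q-1}}$ identifies the fixed points with $\bigoplus_s W_{i_s}$ and yields, \emph{exactly}, order $|C(H)|^{q}$, rank $q\cdot\mathrm{rank}\,C(H)$, and exponent $\exp C(H)$. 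I would then transfer these identities into $W$ itself. By~(\ref{eq12}) the summation map from the external sum onto $\sum_{i\neq 0}W_i$ has kernel inside the $n$-torsion, while the complementary pieces $W_0=C_W(\varphi )$ and $W/(W_0+\dots +W_{n-1})$, the latter a quotient of $W/nW$ by~(\ref{eq11}), are controlled by the bounded rank. Thus all three identities persist in $W$ up to $(p,k,m)$-bounded corrections; and because passing to $W$ raises orders to the $E(n)$-th power, multiplies ranks by $E(n)$, and leaves exponents unchanged, descending to $V$ returns the estimates $|V|\le |C_V(H)|^{q}\cdot B$, $\mathrm{rank}\,V\le qr+B$, and $\exp V\le p^{e}\cdot p^{O(k)}$, where $B$ is $(p,k,m)$-bounded.

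It remains to convert these ``up to bounded'' estimates into a single characteristic $U$ for which $V/U$ satisfies the \emph{sharp} bounds. Here the exponent is handled most cleanly, and this is the model for the rest: once $\exp V\le p^{e+O(k)}$ is known, the fully invariant subgroup $p^{e}V$ has exponent $p^{O(k)}$ and rank at most $mp^{k}$, hence $(p,k,m)$-bounded order, while $V/p^{e}V$ has exponent at most $p^{e}$. I would take $U$ to be the product of $p^{e}V$ with bounded-order characteristic subgroups absorbing the order- and rank-defects produced by $W_0$ and the $n$-torsion relations; since $V/U$ is then a common quotient, the order, rank, and exponent of $V/U$ do not exceed the bounds secured for the separate quotients, giving (a), (b), (c) simultaneously.

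The step I expect to be the main obstacle is precisely this confinement of the defect. Because $\varphi$ is a $p$-automorphism of a $p$-group it is \emph{never} fixed-point-free, so $W$ is only an almost free $H$-module and the eigenspace decomposition is merely the almost direct sum (\ref{eq11})--(\ref{eq12}) with relations killed by $n=p^{k}$. The delicate point is that, for the rank statement, it does not suffice to locate the correction inside a bounded-\emph{rank} subgroup, since quotienting it out could cost the unbounded factor $p^{e}$; one must show the correction lies in a genuinely $(p,k,m)$-\emph{bounded-order} characteristic subgroup. It is exactly the bounded rank of $V$ from Lemma~\ref{l-b-r}, combined with the $n$-torsion nature of the relations in~(\ref{eq12}), that makes this possible, with the subgroup $p^{e}V$ serving as the cleanest manifestation of the mechanism.
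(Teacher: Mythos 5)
Your overall strategy is the paper's: pass to $W=V\otimes_{\mathbb{Z}}\mathbb{Z}[\omega]$, use that $H$ permutes the nonzero $\varphi$-components in regular orbits of length $q$, map a transversal $Y=\sum_s W_{i_s}$ of the orbits into $C_W(H)$ by the orbit-sum $\vartheta\colon y\mapsto y+y^h+\cdots+y^{h^{q-1}}$, observe that $p^k\,\mathrm{Ker}\,\vartheta=0$ because of \eqref{eq12}, and control $W_0=C_W(\varphi)$ and the failure of \eqref{eq11} to be an equality by the $(p,k,m)$-bounded rank of $V$ and the bounded order of $C_W(\varphi)$. That core is sound and is exactly what the paper does.

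The gap is in the final assembly, and it is genuine for part (b). Your scheme is: first prove the three bounds for $V$ itself ``up to a $(p,k,m)$-bounded correction,'' then absorb the correction into a bounded-order characteristic subgroup $U$. For the exponent this works (your $p^eV$ has bounded exponent and rank at most $mp^k$, hence bounded order), and for the order it works with a small extra argument. But for the rank the deduction is invalid: knowing $\mathrm{rank}\,V\le qr+B$ with $B$ bounded does not yield a bounded-order $U$ with $\mathrm{rank}(V/U)\le qr$ --- a homocyclic abelian $p$-group of rank $qr+B$ and large exponent has no such subgroup, since factoring out a bounded-order subgroup barely changes the order while the exponent stays the same. (Moreover the intermediate inequality $\mathrm{rank}\,V\le qr+B$ is vacuous anyway, being weaker than $\mathrm{rank}\,V\le mp^k$.) What is really needed is that the excess rank is carried by elements of \emph{bounded order}, and this must be proved by bounding the rank of a specific quotient directly rather than by correcting a bound on $V$. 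The paper does this by taking the single subgroup $U=\Omega_{f+k}(V)$, where $p^f$ majorizes $p^k$ and the exponent of $W_0$: the quotient $W/\Omega_{f+k}(W)$ is isomorphic to the image of $p^kW$ in $W/\Omega_f(W)$, and by Lemma~\ref{incl} that image is covered by the $q$ translates $Y,Y^h,\dots,Y^{h^{q-1}}$, each a homomorphic image of $Y/\mathrm{Ker}\,\vartheta$, which embeds in $C_W(H)$; this gives the order, rank, and exponent bounds simultaneously for one characteristic subgroup. You correctly identify the mechanism (bounded rank plus the $n$-torsion of the relations), but the proposal never actually constructs the subgroup that realizes it for the rank, and the device you do name, $p^eV$, only settles the exponent.
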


\begin{proof}
 The group $H$ acts on the set of $\varphi $-components $W_i$ with one single-element   orbit $\{W_0\}$ and
$(p^k-1)/q$ regular orbits. We choose one element in every regular $H$-orbit
 and let $Y=\sum _{j=1}^{(p^k-1)/q}W_{i_j}$ be the sum of these chosen $\varphi $-components. The mapping
 $\vartheta : y\to y+y^h+\dots +y^{h^{q-1}}$ is a homomorphism of the abelian group $Y$ into $C_W(H)$. We claim that
$p^k{\rm Ker}\,\vartheta =0$. Indeed, if $y\in {\rm
Ker}\,\vartheta$ is written as $y=\sum _{j=1}^{(p^k-1)/q}y_{i_j}$
for $y_{i_j}\in W_{i_j}$, then $\vartheta (y)$ is equal to $y$
plus a linear combination of elements of $\varphi $-components
$W_{r^li_j}$ with all the indices $r^li_j$ being different from
the indices  $i_1,\dots ,i_{(p^k-1)/q}$. Therefore the equation
$\vartheta (y)=0$ implies $p^ky_{i_j}=0$ by \eqref{eq12}, so that
$p^ky=0$. Clearly, $|Y/{\rm Ker}\,\vartheta |\leq |C_W(H)|$, the
rank of $Y/{\rm Ker}\,\vartheta $ is at most the rank of $C_W(H)$,
and  the exponent of $Y/{\rm Ker}\,\vartheta $ is at most the
exponent of $C_W(H)$.

Let $p^f$ be the maximum of $p^k$ and the exponent of $W_0$,
which is a $(p,k,m)$-bounded number. Then $\Omega _f(W)\geq
W_0+{\rm Ker}\,\vartheta$ (where we use the standard notation $\Omega _i$ for the subgroup generated by all elements of order dividing $p^i$). Since
$$
p^kW\leq W_0+W_1+\dots
+W_{p^k-1}=W_0+Y+Y^h+\dots +Y^{h^{q-1}},
$$
 we obtain the
following.

\begin{lemma} \label{incl}
The image of  $p^kW$ in $W/\Omega _{f}(W)$
is contained in the image of $Y+Y^h+\dots +Y^{h^{q-1}}$ in $W/\Omega _{f}(W)$,
and the image of $Y$ is a homomorphic image of $Y/{\rm Ker}\,\vartheta $.
\end{lemma}

We claim that $U=\Omega _{f+k} (V)$ is the required characteristic subgroup.
The rank of the abelian group $V$ is at most $mp^k$ by
Lemmas~\ref{l1} and~\ref{l-b-r}.
Hence $\Omega _{f+k} (V)$ being of bounded exponent
 has  $(p,k,m)$-bounded order. We now verify that parts (a), (b), (c) are satisfied.

(a) In the abelian $p$-group $W$ the order of the image of  $p^kW$
in $W/\Omega _{f}(W)$ is equal to $|W/\Omega _{f+k}(W)|$.
Therefore Lemma~\ref{incl} and the fact that $|Y/{\rm
Ker}\,\vartheta|\leq |C_W(H)|$ imply %OK
 \begin{equation}\label{ner-ord} |W/\Omega
_{f+k}|\leq  |Y/{\rm Ker}\,\vartheta |^{|H|}\leq |C_W(H)|^{|H|}.
\end{equation}
 Clearly, $\Omega _{f+k} (W)=\Omega _{f+k} (V)\otimes _{{\Bbb Z}}{\Bbb Z}
[\omega ]$  and therefore $|\Omega _{f+k} (W)|=|\Omega _{f+k}
(V)|^{E(p^k)}$. Since $|W|=|V|^{E(p^k)}$ and $|C_W(\varphi
)|=|C_{V}(\varphi )|^{E(p^k)}$, taking the $E(p^k)$-th root of
both sides of \eqref{ner-ord} gives $|V/\Omega _{f+k}(V)|\leq
|C_V(H)|^{|H|}$.

(b) Similarly,  the rank of the image of  $p^kW$ in $W/\Omega
_{f}(W)$ is equal to the rank of $W/\Omega _{f+k}$. By
Lemma~\ref{incl} we obtain that the rank of $W/\Omega _{f+k}(W)$
is at most $|H|$ times the rank of $Y/{\rm Ker}\,\vartheta$, which %!!zapiat
in turn
 is at most the rank of $C_W(H)$). %OK
 Since the ranks are
multiplied by $E(p^k)$ when passing from $V$ to $W$, we obtain
that  the rank of $V/\Omega _{f+k}(V)$ is at most $|H|$ times the
rank of $C_V(H)$, which in turn does not exceed $r$, the rank  of
$C_P(H)$, because $C_P(H)$ covers $C_V(H)$ by Lemma
\ref{l2-coprime} since the action of $H$ is coprime.

(c) Finally, the exponent of the image of  $p^kW$ in $W/\Omega
_{f}(W)$ is equal to the exponent of $W/\Omega _{f+k}$. By
Lemma~\ref{incl} we obtain that  the exponent of $W/\Omega
_{f+k}(W)$ does not exceed  the exponent of $Y/{\rm
Ker}\,\vartheta$ which is at most the exponent of $C_W(H)$ and,
consequently, that of  $C_V(H)$.  Since the action of $H$ is
coprime,  by Lemma \ref{l2-coprime} the exponent of $C_V(H)$ (and
therefore the exponent of $W/\Omega _{f+k}(W)$ as well)  is at
most
$p^e$, the exponent of $C_P(H)$. %OK
\end{proof}

\begin{proof}[Proof of Theorem~\ref{t-g}]
Recall that  $P$ is a finite $p$-group  admitting a Frobenius group $FH$  of automorphisms with cyclic kernel $F$ of
order $p^k$ with  $p^m=|C_P(F)|$ fixed points of the kernel. Let $p^s=|C_P(H)|$, let $r$ be the rank of $C_P(H)$,
and $p^e$ the exponent of $C_P(H)$. We need to find a characteristic subgroup $Q$ of $(p,k,m)$-bounded index
with required bounds for the order, rank, and exponent.
We can of course find such a subgroup separately for each of these parameters and then take the intersection.

By Lemmas~\ref{l1}  and \ref{l-b-r}, the rank of $P$ is at most
$mp^k$. Hence $P$ has a characteristic powerful subgroup of
$(p,k,m)$-bounded index by \cite[Theorem~1.14]{lu-ma}. Therefore
we can assume $P$ to be powerful from the outset.

By \cite{khu93}  (see also \cite[Theorem~12.15]{khu-b2}),
 the group $P$ has a characteristic subgroup
$P_1$ of $(p,k,m)$-bounded index that is soluble of $p^k$-bounded derived length
at most $2K(p^k)$
(where $K$ is Kreknin's function bounding the derived length of a Lie ring with a fixed-point-free
 automorphism of order $p^k$). Let $\mathcal{D}$ be the set of %!!
  factors of the derived series of $P_1$.
  For any $V\in \mathcal{D} $,  we have, by Lemma~\ref{l-ab},  that %OK
  $|V|\leq p^{g}|C_V(H)|^{|H|}$ for some
$(p,k,m)$-bounded number $g=g(p,k,m)$. Then
 $$
 |P_1|= \prod _{V\in \mathcal{D}} |V| \leq p^{2gK(p^k)}\prod _{V\in \mathcal{D}} |C_V(H)|^{|H|}=p^{2gK(p^k)} |C_{P_1}(H)|^{|H|}
 $$
  by Lemma
\ref{l2-coprime}, %!!
since the action
of $H$ is coprime. Since the rank of the powerful $p$-group $P$ is
at most $mp^k$, by taking the $(p,k,m)$-bounded power
$P_1^{f(p,k,m)}$ with $f(p,k,m)= p^{2gK(p^k)}$ we obtain a
characteristic subgroup which has $(p,k,m)$-bounded index by
Lemma~\ref{l-r-o}. The  order of $P_1^{f(p,k,m)}$ is at most
$|C_{P}(H)|^{|H|}$. Indeed, either the exponent of $P_1$ is at
most $f(p,k,m)$ and then $P_1^{f(p,k,m)}=1$, %!!zapiat.
 or the exponent of
$P_1$ is greater than $f(p,k,m)$ and then  $|P_1:
P_1^{f(p,k,m)}|\geq f(p,q,m)$, %!! zapiat.
 whence $|P_1^{f(p,k,m)}|\leq
|C_{P_1}(H)|^{|H|}\leq |C_{P}(H)|^{|H|}$. %OK

The powerful $p$-group $P$ has a series
\begin{equation}\label{unif}
P>P^{p^{k_1}}>P^{p^{k_2}}>\cdots >1
\end{equation}
with uniformly powerful
factors of strictly decreasing ranks. For every factor $S$ of this
series having exponent, say, $p^t$, its  subgroup
$V=S^{p^{[(t+1)/2]}}$ is abelian. By Lemma~\ref{l-ab} the subgroup
$V$ has a characteristic subgroup $U$ of $(p,k,m)$-bounded order
such that the rank of $V/U$ is at most $r|H|$. The rank of $V$ is
equal to the rank of $S$ and $V$ is generated by elements of order
$p^{[t/2]}$. If the rank of $S$ is higher than the %!!
rank of $U$, then %!!
there exists an element of order $p^{[t/2]}$ that belongs to
$U$ and thus $t$ should be $(p,k,m)$-bounded.  %OK
 Therefore the rank
of $S$ can be higher than $r|H|$ only if the exponent of $S$ is
$(p,k,m)$-bounded. Since the rank of $P$ is at most $mp^k$, all
the factors in \eqref{unif} of rank higher than $r|H|$ combine in
a quotient $P/ P^{p^{k_u}}$ of $(p,k,m)$-bounded order; then
$P^{p^{k_u}}$ is the required characteristic subgroup of
$(p,k,m)$-bounded index and of rank at most $r|H|$.

Let $p^v$ be the exponent of $P$. Since in the powerful group $P$
the series $P>P^{p}\geq P^{p^2}\geq P^{p^3}\geq \cdots {}$ is
central, the subgroup $P^{p^{[(v+1)/2]}}$ is abelian. By
Lemma~\ref{l-ab} the exponent of $P^{p^{[(v+1)/2]}}$ is at most
$p^{e+f}$ for some $(p,k,m)$-bounded number $f$. Hence the
exponent of $P$ is at most $p^{2e+g}$ for some $(p,k,m)$-bounded
number $g=g(p,k,m)$. Since the rank of $P$ is at most $mp^k$, by
Lemma~\ref{l-r-o} the characteristic subgroup $P^{p^g}$ has
$(p,k,m)$-bounded index and exponent at most  $p^{2e}$. \end{proof}

\end{document}